\newcommand{\sysn}{\left\{\begin{array}{rcl}}
\newcommand{\sysk}{\end{array}\right.}
\newtheorem{theorem}{Theorem}[section]
\theoremstyle{example}
\newtheorem{example}[theorem]{Example}
\newtheorem{proposition}[theorem]{Proposition}
\theoremstyle{definition}
\newtheorem{definition}[theorem]{Definition}
\newtheorem{corollary}[theorem]{Corollary}
\begin{document}

\begin{frontmatter}

%% Title, authors and addresses

%% use the tnoteref command within \title for footnotes;
%% use the tnotetext command for the associated footnote;
%% use the fnref command within \author or \address for footnotes;
%% use the fntext command for the associated footnote;
%% use the corref command within \author for corresponding author footnotes;
%% use the cortext command for the associated footnote;
%% use the ead command for the email address,
%% and the form \ead[url] for the home page:
%%
%%\title{Topological-Algebraic Properties of Function Space with Set-Open Topology\tnoteref{label1}}
%%\tnotetext[label1]{}
%%\author{Alexander V. Osipov\corref{cor1}\fnref{label2}}
%%\ead{OAB@list.ru}
%% \ead[url]{home page}
%% \fntext[label2]{}
%% \cortext[cor1]{}
%% \address{Address\fnref{label3}}
%% \fntext[label3]{}

\title{On sequential separability of functional spaces}

%% use optional labels to link authors explicitly to addresses:
%% \author[label1,label2]{<author name>}
%% \address[label1]{<address>}
%% \address[label2]{<address>}

\author[label1]{Alexander V. Osipov}

\ead{OAB@list.ru}

\tnotetext[label1]{The research has been supported by Act 211
Government of the Russian Federation, contract ¹ 02.A03.21.0006.}

\author[label2]{Evgenii G. Pytkeev}

 \ead{pyt@imm.uran.ru}

\tnotetext[label2]{The research has been supported by the Russian
Foundation for Basic Research (no.~16-01-00649, no.~16-07-00266).}

\address{Ural Federal
 University, Institute of Mathematics and Mechanics, Ural Branch of the Russian Academy of Sciences, 16,
 S.Kovalevskaja street, 620219, Ekaterinburg, Russia}

\begin{abstract}
%% Text of abstract
In this paper, we give necessary and sufficient conditions for the
space $B_1(X)$ of first Baire class
 functions on a Tychonoff space $X$, with pointwise topology, to be (strongly) sequentially
 separable. Also we claim that there are spaces $X$ such that $B_1(X)$ is not sequentially
 separable space, but $C_p(X)$ is sequentially
 separable (the Sorgenfrey line, the Niemytzki plane).
\end{abstract}

\begin{keyword}
%% keywords here, in the form: keyword \sep keyword
Baire function \sep sequential separability \sep function spaces
\sep strongly sequentially separable

%% MSC codes here, in the form: \MSC code \sep code

\MSC 54C40 \sep 54C35 \sep 54D60 \sep 54H11 \sep 46E10
%% or \MSC[2008] code \sep code (2000 is the default)

\end{keyword}

\end{frontmatter}

%%
%% Start line numbering here if you want
%%
% \linenumbers

%% main text

\section{Introduction}

In \cite{vel}, \cite{glm} were given necessary and sufficient
conditions for the space $C_{p}(X)$ of continuous real-valued
functions on a space $X$, with pointwise topology, to be
sequentially separable. Also in \cite{glm} was given necessary and
sufficient condition for the space $C_{p}(X)$ to be strongly
sequentially separable.

In this paper, we give necessary and sufficient conditions for the
space $B_1(X)$ of first Baire class
 functions on a space $X$, with pointwise topology, to be sequentially separable and strongly sequentially separable.

\section{Main definitions and notation}

Throughout this article all topological spaces are considered
 Tychonoff. As usually, we will be denoted by $C_{p}(X)$ ($B_1(X)$) a
 set of all real-valued continuous functions $C(X)$ (a set of all first Baire class
 functions $B_1(X)$ i.e., pointwise limits of continuous functions) defined on
 $X$ provided with the pointwise convergence topology. If $X$ is a
 space and $A\subseteq X$, then the sequential closure of $A$,
 denoted by $[A]_{seq}$, is the set of all limits of sequences
 from $A$. A set $D\subseteq X$ is said to be sequentially dense
 if $X=[D]_{seq}$. If $D$ is a countable sequentially dense subset
 of $X$ then $X$ call sequentially separable space.

 Call $X$ strongly sequentially separable if $X$ is separable and
 every countable dense subset of $X$ is sequentially dense.

 We recall that a subset of $X$ that is the
 complete preimage of zero for a certain function from~$C(X)$ is called a zero-set.
A subset $O\subseteq X$  is called  a cozero-set (or functionally
open) of $X$ if $X\setminus O$ is a zero-set. If a set
$Z=\bigcup_{i} Z_i$ where $Z_i$ is a zero-set of $X$ for any $i\in
\omega$ then $Z$ is called $Z_{\sigma}$-set of $X$. Note that if a
space $X$ is a perfect normal space, then class of
$Z_{\sigma}$-sets of $X$ coincides with class of $F_{\sigma}$-sets
of $X$.

It is well known (\cite{kur}), that $f\in B_1(X)$ if and only if
$f^{-1}(G)$ --- $Z_{\sigma}$-set for any open set $G$ of real line
$\mathbb{R}$.

Further we use the following theorems.

\medskip

\begin{theorem} (\cite{vel}). \label{th31} A space $C_p(X)$ is
sequentially separable if and only if there
 exist  a condensation (one-to-one
continuous map) $f: X \mapsto Y$ from a space $X$ on a
 separable metric space $Y$, such that $f(U)$ --- $F_{\sigma}$-set
 of $Y$ for any cozero-set $U$ of $X$.
\end{theorem}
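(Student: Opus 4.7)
The plan is to prove both implications by means of the diagonal evaluation $\Phi \colon X \to \mathbb{R}^{\omega}$, $\Phi(x) = (f_n(x))_n$, associated to a countable family of continuous functions.

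For the forward direction, take $\{f_n\} \subset C_p(X)$ countable and sequentially dense and set $Y = \Phi(X) \subset \mathbb{R}^{\omega}$, which is separable and metrizable. The map $\Phi$ is continuous, and it is injective: for distinct $x,y \in X$, Tychonoff-ness gives a continuous $g$ with $g(x) \ne g(y)$, and a subsequence $f_{n_k} \to g$ pointwise must satisfy $f_{n_k}(x) \ne f_{n_k}(y)$ for large $k$. To check the $F_{\sigma}$ property of $\Phi$, fix a cozero set $U = \{g > 0\}$, pick $f_{n_k} \to g$ pointwise, and write
\[
U = \bigcup_{m, N \in \omega} \bigl\{x \in X : f_{n_k}(x) \ge 1/m \text{ for all } k \ge N\bigr\}.
\]
The $\Phi$-image of each inner set equals $Y \cap C_{m,N}$, where $C_{m,N} = \{z \in \mathbb{R}^{\omega} : z_{n_k} \ge 1/m \text{ for all } k \ge N\}$ is closed, so $\Phi(U)$ is $F_{\sigma}$ in $Y$.

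For the converse direction, suppose the condensation $f \colon X \to Y$ is given with the prescribed $F_{\sigma}$-image property. Since $Y$ is separable metric, a standard construction (using a countable dense subset of $Y$ and rational piecewise-linear combinations of distance functions, together with a diagonal argument) produces a countable set $\{g_n\} \subset C(Y)$ whose sequential closure in $\mathbb{R}^{Y}$ contains $B_1(Y)$. My candidate sequentially dense subset of $C_p(X)$ is then $\{g_n \circ f\}$. Given $h \in C_p(X)$, define $\phi := h \circ f^{-1} \colon Y \to \mathbb{R}$. For any open $G \subset \mathbb{R}$, the preimage $h^{-1}(G)$ is cozero in $X$ (closed sets in $\mathbb{R}$ are zero-sets), so $\phi^{-1}(G) = f(h^{-1}(G))$ is $F_{\sigma}$ in $Y$ by hypothesis. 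Using that metric spaces are perfectly normal (so $Z_{\sigma} = F_{\sigma}$), the Kuratowski criterion recalled before the theorem yields $\phi \in B_1(Y)$. Hence some subsequence of $\{g_n\}$ tends pointwise to $\phi$, and composing with $f$ produces a subsequence of $\{g_n \circ f\}$ tending pointwise to $h$.

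The main obstacle I foresee is in the converse direction: establishing that a separable metric $Y$ admits a countable $\{g_n\} \subset C(Y)$ sequentially dense in $B_1(Y)$. Because the pointwise topology on $C_p(Y)$ is not metrizable, this cannot be read off from the density of $\{g_n\}$ in $C_p(Y)$ alone and requires a careful diagonal argument that simultaneously exploits the metric on $Y$ and its countable base. The forward direction, by contrast, is essentially the bookkeeping observation that $\{g > 0\} = \bigcup_{m,N} \{x : f_{n_k}(x) \ge 1/m \text{ for all } k \ge N\}$ whenever $f_{n_k} \to g$ pointwise.
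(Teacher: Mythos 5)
Your argument is correct, but note first that the paper itself gives no proof of Theorem~\ref{th31}: it is quoted from \cite{vel}, so there is no in-paper argument to check you against line by line. What you have written is, in effect, the $C_p$-analogue of the proof the paper does give for its main Theorem~\ref{th1}, and both halves hold up. Your forward direction via the diagonal map $\Phi(x)=(f_n(x))_{n}$ is the same construction the paper phrases as ``the topology $\tau$ generated by $\{f^{-1}(G): f\in S\}$'': the space $Y=(X,\tau)$ there is exactly your $\Phi(X)\subseteq \mathbb{R}^{\omega}$, and your decomposition $U=\bigcup_{m,N}\{x: f_{n_k}(x)\ge 1/m \text{ for all } k\ge N\}$ correctly verifies the $F_{\sigma}$-image condition, since each inner set is $\Phi^{-1}(C_{m,N})$ and $\Phi(\Phi^{-1}(C_{m,N}))=Y\cap C_{m,N}$ (only surjectivity of $\Phi$ onto $Y$ is needed here; injectivity is needed just for ``condensation''). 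In the converse, your reduction is sound: for open $G\subseteq\mathbb{R}$ the set $h^{-1}(G)$ is cozero (open subsets of $\mathbb{R}$ are cozero, and cozero sets pull back under continuous maps), so $\phi^{-1}(G)=f(h^{-1}(G))$ is $F_{\sigma}$ in $Y$ by hypothesis, and the criterion from \cite{kur} recalled in the paper (with $Z_{\sigma}=F_{\sigma}$ in the perfectly normal space $Y$) gives $\phi=h\circ f^{-1}\in B_1(Y)$. The one ingredient you leave as a black box --- a countable $\{g_n\}\subset C(Y)$ with sequential closure containing $B_1(Y)$ for $Y$ separable metric --- is precisely the strengthening of Theorem~\ref{th32} that the paper explicitly records immediately after its statement (``there exist a countable subset $S\subset C(X)$, such that $[S]_{seq}=B_1(X)$''), so invoking it rather than reproving it is legitimate here; the paper's own proof of the $(2)\Rightarrow(1)$ half of Theorem~\ref{th1} leans on exactly the same fact. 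Two cosmetic points: a convergent sequence drawn from the set $\{f_n\}$ need not literally be a subsequence (indices may repeat or decrease), though nothing in your argument uses monotonicity of the $n_k$; and in writing $U=\{g>0\}$ you should note that every cozero set has this form after replacing $g$ by $|g|$.
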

\medskip

\begin{theorem} (\cite{vel}). \label{th32} A space $B_1(X)$ is
sequentially separable for any separable metric space $X$.
\end{theorem}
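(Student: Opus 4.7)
The plan is to exhibit a countable sequentially dense subset of $B_1(X)$ by combining Theorem~\ref{th31} with a diagonal approximation argument.

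First, I would verify that Theorem~\ref{th31} gives sequential separability of $C_p(X)$: take the identity $\mathrm{id}:X\to X$ as the required condensation. Since $X$ is metric, every cozero (equivalently, open) subset of $X$ is automatically an $F_\sigma$-set, so the hypothesis of Theorem~\ref{th31} holds and $C_p(X)$ is sequentially separable. Fix a countable set $D\subseteq C(X)$ that is sequentially dense in $C_p(X)$. Because every continuous function is of Baire class~$1$, we have $D\subseteq B_1(X)$, and the problem reduces to showing that $D$ is sequentially dense in $B_1(X)$.

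Second, given $f\in B_1(X)$, the definition supplies a sequence $f_n\in C(X)$ with $f_n\to f$ pointwise. Sequential density of $D$ in $C_p(X)$ then yields, for each $n$, a sequence $d_{n,k}\in D$ with $d_{n,k}\to f_n$ pointwise as $k\to\infty$. The goal is to choose indices $k(n)$ so that the diagonal sequence $g_n:=d_{n,k(n)}$ satisfies $g_n\to f$ pointwise on all of~$X$.

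Third, the selection of $k(n)$ is the technical heart. I would not attempt a uniform-in-$x$ choice, since that would demand uniform convergence of $d_{n,k}\to f_n$, which fails in general. Instead, I would exploit second countability of $X$: fix a countable base $\{B_m\}$ closed under finite unions, a countable dense $Q\subseteq X$, and use the Kuratowski characterization $f^{-1}(G)\in Z_\sigma$ (for $X$ metric, $Z_\sigma=F_\sigma$) to obtain a countable family of closed level-approximants of $f$ expressible from $\{B_m\}$. I would then pick $k(n)$ so that $g_n$ approximates $f_n$ up to accuracy $1/n$ on an increasing finite list consisting of initial segments of $Q$ together with finitely many of these level-approximants, and show that this is enough to force $g_n(x)\to f(x)$ at every $x\in X$.

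The main obstacle I expect is precisely this diagonal step: ensuring pointwise convergence on the entire (possibly uncountable) set $X$ from a countable coordinate-wise choice. A naive diagonal only gives convergence on the countable test set $Q$, and since $f$ is only Baire class~$1$ it need not be determined by its values on $Q$. Overcoming this requires coupling second countability of $X$ with the $F_\sigma$-structure of the level sets of $f$; this is where the separable metric hypothesis on $X$ does its essential work.
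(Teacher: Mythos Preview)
First, note that the paper does not prove this result: it is quoted from \cite{vel}, with the added remark that Velichko's argument in fact yields a countable $S\subset C(X)$ with $[S]_{seq}=B_1(X)$. So there is no in-paper proof to compare against, but your proposal can still be judged on its own.

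The diagonal step you flag as ``the main obstacle'' is a genuine gap, not a technicality. Starting from an \emph{arbitrary} countable $D$ that is sequentially dense in $C_p(X)$, you are effectively trying to show that $C(X)\subseteq[D]_{seq}$ forces $[C(X)]_{seq}\subseteq[D]_{seq}$ in $\mathbb{R}^{X}$; this is a form of idempotency of sequential closure, which fails in general. Your suggested remedy---choosing $k(n)$ so that $g_n$ matches $f_n$ to within $1/n$ on a growing finite set drawn from a countable dense $Q$ together with some ``level-approximants'' of $f$---only controls $g_n$ on a countable test set, and since $f$ is merely Baire~1 its values on $Q$ do not determine it. You give no mechanism by which the $F_\sigma$ structure of $f^{-1}(G)$ converts finitely many pointwise constraints on $g_n$ into convergence at every $x\in X$; absent uniform (or at least compact-open) control of $d_{n,k}\to f_n$, which is unavailable on a general separable metric $X$, the diagonal cannot be completed as written. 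The route that actually works, and what the paper's remark about Velichko's proof records, goes the other way: one \emph{constructs} a specific countable $S\subset C(X)$ from a countable base of $X$ and shows directly, via the $F_\sigma$-preimage characterization of Baire-1 functions, that each $f\in B_1(X)$ is a pointwise limit of a sequence from $S$. The separable metric hypothesis does its work in building $S$, not in a diagonal selection.
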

\medskip

Note that proof this theorem  given more, namely there exist a
countable subset $S\subset C(X)$, such that $[S]_{seq}=B_1(X)$.

\section{Sequentially separable}

The main result of this paper is a next theorem.

\begin{theorem}\label{th1}
 A space $B_1(X)$ is sequentially separable if and only if there
 exist a bijection $\varphi: X \mapsto Y$ from a space $X$ onto a
 separable metrizable space $Y$, such that

\begin{enumerate}

\item $\varphi^{-1}(U)$ --- $Z_{\sigma}$-set of $X$ for any open
set $U$ of $Y$;

\item  $\varphi(T)$ --- $F_{\sigma}$-set of $Y$ for any zero-set
$T$ of $X$.

\end{enumerate}

\end{theorem}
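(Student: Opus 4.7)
The proof splits into necessity and sufficiency, with the separable metrizable target $Y$ in the necessity direction being the image of $X$ under the diagonal map induced by a countable sequentially dense $S\subset B_1(X)$.

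For sufficiency, assume $\varphi\colon X\to Y$ is given. By Theorem \ref{th32} (and the strengthening noted after it), fix a countable set $S\subset C(Y)$ with $[S]_{seq}=B_1(Y)$. Set $S^\ast=\{f\circ\varphi:f\in S\}$. Condition (1) together with $f\in C(Y)$ forces $(f\circ\varphi)^{-1}(G)=\varphi^{-1}(f^{-1}(G))$ to be a $Z_\sigma$-set for every open $G\subseteq\mathbb{R}$, so $S^\ast\subset B_1(X)$ by Kuratowski's characterization. Given $g\in B_1(X)$, define $h=g\circ\varphi^{-1}\colon Y\to\mathbb{R}$; for $G\subseteq\mathbb{R}$ open, $g^{-1}(G)=\bigcup_i Z_i$ with zero-sets $Z_i$, so, by bijectivity of $\varphi$ and condition (2), $h^{-1}(G)=\bigcup_i\varphi(Z_i)$ is $F_\sigma$ in $Y$. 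Thus $h\in B_1(Y)$, and a sequence $f_k\in S$ with $f_k\to h$ pointwise yields $f_k\circ\varphi\to g$ pointwise on $X$; hence $[S^\ast]_{seq}=B_1(X)$.

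For necessity, let $S=\{f_n:n\in\omega\}\subset B_1(X)$ be countable and sequentially dense. Since $X$ is Tychonoff, any two distinct points $x,y$ are separated by some $g\in C(X)\subset B_1(X)$, and the pointwise-open set $\{h\in B_1(X):h(x)\neq h(y)\}$ contains $g$ and hence meets the (ordinary) dense set $S$, so $S$ separates points of $X$. Define $\varphi\colon X\to\mathbb{R}^\omega$ by $\varphi(x)=(f_n(x))_{n\in\omega}$; it is an injection onto $Y=\varphi(X)$, which is separable metrizable as a subspace of $\mathbb{R}^\omega$. For condition (1), a basic open set of $Y$ is the trace of $\prod_n U_n$ where $U_n=\mathbb{R}$ off a finite set $F$; its preimage is $\bigcap_{n\in F}f_n^{-1}(U_n)$, a finite intersection of $Z_\sigma$-sets (and so $Z_\sigma$, since zero-sets are closed under finite intersections via $Z(f)\cap Z(g)=Z(f^2+g^2)$). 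Second countability of $Y$ then gives (1) for arbitrary open sets.

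The main obstacle is condition (2). Fix a zero-set $T=f^{-1}(0)$ in $X$. The characteristic function $\chi_{X\setminus T}$ is the pointwise limit of the continuous functions $\min(n|f|,1)$, so $\chi_{X\setminus T}\in B_1(X)$. By sequential density choose $g_k=f_{n_k}\in S$ with $g_k\to\chi_{X\setminus T}$ pointwise on $X$. If $\pi_m\colon\mathbb{R}^\omega\to\mathbb{R}$ is the $m$-th projection, then $\pi_{n_k}\circ\varphi=g_k$, so the pointwise limit $h=\lim_k\pi_{n_k}\restriction Y$ exists on all of $Y$ and satisfies $h\circ\varphi=\chi_{X\setminus T}$. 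As a pointwise limit of continuous functions on $Y$, $h\in B_1(Y)$, and since $\varphi$ is a bijection, $\varphi(T)=h^{-1}(\{0\})=h^{-1}\bigl((-\tfrac12,\tfrac12)\bigr)$; Kuratowski's theorem applied in the separable metric space $Y$ makes this set $F_\sigma$, which is exactly condition (2).
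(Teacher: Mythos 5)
Your proof is correct and follows essentially the same route as the paper: in the necessity direction your diagonal map into $\mathbb{R}^\omega$ induced by $S$ is exactly the paper's retopologization of $X$ by the family $\{f^{-1}(G): f\in S\}$, and your condition (2) argument via the characteristic function of the zero-set matches the paper's; in the sufficiency direction you simply unwind the paper's sandwich argument ($C(Y)\subset F(B_1(X))\subset B_1(Y)$ plus the strengthening of Theorem \ref{th32}) by transporting the countable set $S\subset C(Y)$ back through $\varphi$ explicitly. A minor merit of your write-up is that it makes explicit two details the paper glosses over, namely that the dense set $S$ separates points of $X$ (so $\varphi$ is injective and $Y$ is Hausdorff metrizable) and why $g\circ\varphi^{-1}\in B_1(Y)$ via the Lebesgue--Hausdorff characterization in the metrizable space $Y$.
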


\begin{proof} $(1) \Rightarrow (2)$. Let $B_1(X)$ be a sequentially
separable space, and $S$ be a countable sequentially dense subset
 of $B_1(X)$. Consider a topology $\tau$ generated by the family
 $\mathcal{P}=\{f^{-1}(G): G$ is an open set of $\mathbb{R}$ and
 $f\in S \}$. A space $Y=(X,\tau)$ is a separable metrizable space
 because $S$ is a countable dense subset
 of $B_1(X)$. Note that a function $f\in S$, considered as map from
 $Y$ to $\mathbb{R}$, is a continuous function. Let $\varphi$ be the identity
 map from $X$ on $Y$.

 We claim that $\varphi^{-1}(U)$ --- $Z_{\sigma}$-set of $X$ for any open
set $U$ of $Y$. Note that class of $Z_{\sigma}$-sets is closed
under a countable unions and finite intersections of its elements.
It follows that it is sufficient to prove for any $P\in
\mathcal{P}$. But $\varphi^{-1}(P)$ --- $Z_{\sigma}$-set for any
$P\in \mathcal{P}$ because $f\in S\subset B_1(X)$.

Let $T$ be a zero-set of $X$ and $h$ be a characteristic function
of $T$. Since $T$ is a zero-set of $X$, $h\in B_1(X)$. There are
$\{f_n\}_{n\in \omega}\subset S$ such that $\{f_n\}_{n\in
\omega}\mapsto h$. Since $S\subset C_p(Y)$, $h\in B_1(Y)$ and,
hence, $h^{-1}(\frac{1}{2},\frac{3}{2})=T$ is a $Z_{\sigma}$-set
of $Y$.

$(2) \Rightarrow (1)$. Let $\varphi$ be a bijection from $X$ on
$Y$ satisfying the conditions of theorem. Then $h=f\circ
\varphi\in B_1(X)$ for any $f\in C(Y)$
($h^{-1}(G)=\varphi^{-1}(f^{-1}(G))$ --- $Z_{\sigma}$-set of $X$
for any open set $G$ of $\mathbb{R}$). Moreover $g=f\circ
\varphi^{-1}\in B_1(Y)$ for any $f\in B_1(X)$ because of
$\varphi(Z)$ is a $Z_{\sigma}$-set of $Y$ for any a
$Z_{\sigma}$-set $Z$ of $X$. Define a map $F: B_1(X) \mapsto
B_1(Y)$ by $F(f)=f\circ \varphi^{-1}$. Since $\varphi$ is a
bijection, $C_p(Y)$ embeds in $F(B_1(X))$ i.e., $C(Y)\subset
F(B_1(X))$. By Theorem \ref{th32}, each subspace $D$ such that
$C(Y)\subset D\subset B_1(Y)$ is sequentially separable. Thus
$B_1(X)$ (homeomorphic to $F(B_1(X))$) is sequentially separable.
\end{proof}

\medskip

\begin{corollary} A space $B_1(X)$ is sequentially separable for
any regular space $X$ with a countable network.

\end{corollary}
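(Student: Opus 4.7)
The plan is to invoke Theorem \ref{th1}: I need to construct a bijection $\varphi : X \to Y$ onto a separable metrizable $Y$ such that preimages of open sets are $Z_\sigma$ in $X$ and images of zero-sets are $F_\sigma$ in $Y$. The key preliminary observation is that a regular (in fact Tychonoff, by the blanket assumption) space with a countable network is hereditarily Lindel\"of, hence perfectly normal, so in $X$ every closed set is a $G_\delta$ and therefore a zero-set; equivalently, the class of $F_\sigma$-sets and the class of $Z_\sigma$-sets coincide in $X$, and every open set of $X$ is automatically a $Z_\sigma$-set.

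The construction of $\varphi$ uses a countable network $\mathcal{N}=\{N_n:n\in\omega\}$. For each $n$, since $\overline{N_n}$ is a zero-set of $X$, I choose $f_n\in C(X,[0,1])$ with $f_n^{-1}(0)=\overline{N_n}$, and define $\varphi(x)=(f_n(x))_{n\in\omega}\in[0,1]^{\omega}$. Continuity of $\varphi$ is immediate; injectivity follows from a standard regularity-plus-network argument: for $x\neq y$ choose an open $U\ni x$ with $y\notin\overline{U}$, then pick $N_n\in\mathcal{N}$ with $x\in N_n\subseteq U$, so $y\notin\overline{N_n}$ and $f_n$ separates them. Let $Y=\varphi(X)$; it is separable metrizable as a subspace of $[0,1]^{\omega}$.

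Condition (1) of Theorem \ref{th1} is automatic: $\varphi^{-1}(U)$ is open in $X$ and open sets are $Z_\sigma$ by perfect normality. For condition (2), the main point is the representation
\[
T=\bigcup\{\overline{N_n} : \overline{N_n}\subseteq T\}
\]
valid for every closed $T\subseteq X$: given $x\in T$, regularity produces an open $V$ with $x\in V\subseteq\overline{V}\subseteq X\setminus(X\setminus T)=T$... actually using regularity the other way, for $x\in T\subset X$ and the closed set $X\setminus\mathrm{(nbhd)}$...  I pick, for $x\in T$, an open $V\ni x$ with $\overline{V}\subseteq$ any preassigned open neighborhood of $T$ only if $T$ is open, so instead I argue directly: for $x\in T$, use regularity in the form that $x$ and the (closed) set $X\setminus W$ for a suitable open $W\supseteq\{x\}$ can be separated by $\overline{V}\subseteq W$; combined with the network property at $x$ inside $V$, this yields $N_n$ with $x\in N_n\subseteq V$ and $\overline{N_n}\subseteq\overline{V}\subseteq T$. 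Then
\[
\varphi(T)=\bigcup\{\varphi(\overline{N_n}):\overline{N_n}\subseteq T\},
\]
and each $\varphi(\overline{N_n})=\{y\in Y:y_n=0\}$ is closed in $Y$ since the $n$-th coordinate projection is continuous. Hence $\varphi(T)$ is $F_\sigma$ in $Y$.

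The step I expect to require the most care is obtaining the decomposition of an arbitrary closed set $T$ as a countable union of closures of network elements lying inside $T$; everything else (perfect normality from hereditary Lindel\"ofness, injectivity of $\varphi$, closedness of the fibers $\{y_n=0\}$) is essentially bookkeeping. Once both conditions are verified, Theorem \ref{th1} delivers sequential separability of $B_1(X)$.
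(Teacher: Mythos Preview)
Your verification of condition~(2) contains a genuine gap: the decomposition
\[
T=\bigcup\{\overline{N_n}:\overline{N_n}\subseteq T\}
\]
simply fails for closed sets $T$ with empty interior. Take $X=\mathbb{R}$ with the network of open intervals with rational endpoints and $T=\{0\}$: no closure $[p,q]$ of a network element lies inside $T$, so the right-hand side is empty. The regularity manoeuvre you sketch cannot be repaired, because the conclusion $\overline{V}\subseteq T$ would require $T$ to contain an open neighborhood of $x$, which a closed set with empty interior never does. What your argument actually shows, once written for the complement, is that $X\setminus T=\bigcup\{\overline{N_n}:\overline{N_n}\subseteq X\setminus T\}$ for the \emph{open} set $X\setminus T$; translated through $\varphi$ this only yields that $\varphi(T)$ is $G_\delta$ in $Y$, the wrong Borel class for condition~(2).

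The paper avoids this by building $Y$ in the opposite direction. Rather than condensing $X$ onto a coarser second-countable space, it \emph{refines} the topology: each closed network element $F_n$ and its complement $X\setminus F_n$ are declared open, producing a zero-dimensional second-countable space $Z=(X,\tau)$ finer than $X$. The identity $h:Z\to X$ is then continuous, so for $\varphi=h^{-1}$ every zero-set of $X$ is already a zero-set of $Z$, and condition~(2) is immediate. Condition~(1) becomes the nontrivial half, but it follows since every $\tau$-subbasic set is $F_\sigma$ (hence $Z_\sigma$, by the perfect normality you correctly identified) in $X$. In short, your coarser $Y$ makes~(1) trivial and~(2) hard; the paper's finer $Y$ reverses the roles and gets~(2) for free.
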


\begin{proof} Let $X$ be a regular space $X$ with a countable
network. Then there are a countable network $\{F_n: n\in \omega\}$
of $X$ where $F_n$ is closed subset of $X$ for $n\in \omega$.
Consider a topology $\tau$ on $X$ generated by the family
 $\mathcal{P}=\{F_n, X\setminus F_n : n\in \omega\}$. Let
 $Z=(X,\tau)$. The space $Z$ is zero-dimensional space with countable
 base and, hence, it is metrizable space. Let $h: Z\mapsto X$ be the identity
 map. Note that any element of $\mathcal{P}$ is a $F_{\sigma}$-set of $X$.
 Hence $h(U)$ is a $F_{\sigma}$-set of $X$ for any open set $U$ of
 $Z$. Since $h$ is a condensation, $h^{-1}(T)$ is a zero-set of $Z$
 for any zero-set $T$ of $X$. Consider $\varphi=h^{-1}: X\mapsto
 Z$. Then $\varphi$ satisfies all the conditions of the Theorem \ref{th1}.

\end{proof}
In \cite{vel} was proved that identity
 maps of Sorgenfrey line $\mathcal{S}$ onto $\mathbb{R}$  and of
Niemytzki Plane $\mathcal{N}$ onto the closed upper half-plane
$\mathbb{R}^{2}_{+}$ satisfies the condition of the Theorem
\ref{th31}. Hence $C_p(\mathcal{S})$ and $C_p(\mathcal{N})$ are
sequentially separable spaces.

\medskip

We claim that $B_1(\mathcal{S})$ and $B_1(\mathcal{N})$ are not
sequentially separable spaces.

\medskip
We recall some concepts and facts related to the space of first
Baire class  functions.

$\bullet$ A map $f:X \mapsto Y$ be called $Z_{\sigma}$-map, if
$f^{-1}(Z)$ is a $Z_{\sigma}$-set of $X$ for any zero-set $Z$ of
$Y$.

Note that a continuous map is a $Z_{\sigma}$-map.

$\bullet$ A space $X$ be called an analytic space if there is a
continuous map $\varphi$ from $\mathbb{P}$ (space of irrational
numbers) onto $X$.

$\bullet$ A space $X$ be called an $K$-analytic space if there are
$\breve{C}$ech-complete Lindel$\ddot{e}$of space $Y$ and a
continuous map $f$ from $Y$ onto $X$.

An analytic space or a compact space are $K$-analytic space
(\cite{kur}).

If in definition of $K$-analytic space the continuous map is
replace by $Z_{\sigma}$-map we get a wider class of spaces ---
class of $K_{\sigma}$-analytic spaces (\cite{pyt}).

Note that if $X$ is a $K_{\sigma}$-analytic space then $X^{n}$ is
 Lindel$\ddot{e}$of space for each $n\in \omega$ (\cite{pyt}).

\begin{example} There is
a space $X$ such that $C_p(X)$ is sequentially separable space but
$B_1(X)$ is not sequentially separable.
\end{example}

\begin{proof}
We claim that $B_1(\mathcal{S})$ is not sequentially separable
space.

Consider $B_1(\mathcal{S})$. Let $p: \mathcal{S} \mapsto
\mathbb{R}$ be the identity
 map. Then $p(T)$ is a $F_{\sigma}$-set of $\mathbb{R}$ for any cozero-set
 $T$ of $S$ (\cite{vel}).

 Suppose that $B_1(\mathcal{S})$ is sequentially separable space.
 By Theorem \ref{th1}, there are separable metrizable space $Y$
 and a bijection $\varphi: \mathcal{S} \mapsto Y$ from $\mathcal{S}$ onto
 $Y$ such that $\varphi(T)$ is a $F_{\sigma}$-set of $Y$ for any
 closed subset $T$ of $\mathcal{S}$ and $\varphi^{-1}(U)$ is a  $F_{\sigma}$-set of $\mathcal{S}$ for any
 open subset $U$ of $Y$. Consider the map $\varphi\circ p^{-1}:
 \mathbb{R} \mapsto Y$. Since $\varphi\circ p^{-1}$ is a Borel
 function, the space $Y$ is an analytic separable metrizable space
 (\cite{kur}). Thus the map $\varphi^{-1}: Y\mapsto \mathcal{S}$ is a
 $Z_{\sigma}$-bijection of analytic separable metrizable space
 $Y$. Then $\mathcal{S}$ is a $K_{\sigma}$-analytic space and,
 hence, $\mathcal{S}^2$ is a Lindel$\ddot{e}$of space, a
 contradiction.

\end{proof}

The same proof remains valid for the space $B_1(\mathcal{N})$.

\section{Strongly sequentially separable}

From \cite{ps}, we note that $B_1(X)$ is separable if and only if
$X$ has a coarser second countable topology.

In \cite{glm} be characterized those spaces $X$ so that $C_p(X)$
is strongly sequentially separable. The following definition and
theorems are relevant. For a proof of Theorem \ref{th2} see
\cite{gn}, Theorem \ref{th3} see \cite{glm}, and more information
on the property $\gamma$ see \cite{gn}.

\begin{definition} A family $\alpha$ of subsets of $X$ is called
an $\omega$-cover of $X$ if for every finite $F\subset X$ there is
a $U\in \alpha$ such that $F\subset U$.
\end{definition}

\begin{theorem}\label{th2}(\cite{gn}). The following are equivalent:

\begin{enumerate}
\item $C_p(X)$ is Frechet-Urysohn;

\item $X$ has the property $\gamma$: for any open $\omega$-cover
$\alpha$ of $X$ there is a sequence $\beta\subset \alpha$ such
that $\lim\inf \beta=X$.

\end{enumerate}

\end{theorem}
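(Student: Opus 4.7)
The plan is to prove each direction by translating between two topological worlds: basic neighborhoods of the zero function in $C_p(X)$, which are controlled by finite subsets of $X$, and open $\omega$-covers of $X$, whose elements capture arbitrary finite subsets. The dictionary I will use is that a finite set $F \subset X$ together with a tolerance $1/n$ specifies a basic neighborhood of $0$, and $g$ lies in it precisely when $F \subset g^{-1}(-1/n, 1/n)$.

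For the implication $(2) \Rightarrow (1)$, since $C_p(X)$ is a topological group I first reduce the Frechet-Urysohn condition to the case of sequential closure at $0$. Given $A \subset C_p(X)$ with $0 \in \overline{A}$, for each $n$ the family $\mathcal{U}_n = \{g^{-1}(-1/n, 1/n) : g \in A\}$ is an open $\omega$-cover of $X$; this is exactly the reformulation of $0 \in \overline{A}$ via the dictionary above. Using $\gamma$ in its selection-principle form $S_1(\Omega, \Gamma)$, I then pick $g_n \in A$ such that $\{g_n^{-1}(-1/n, 1/n)\}_n$ is a $\gamma$-cover of $X$. For each $x \in X$ and $\epsilon > 0$, choosing $N$ with $1/N < \epsilon$ yields $|g_n(x)| < 1/n \le 1/N < \epsilon$ at all sufficiently large $n$ for which $x \in g_n^{-1}(-1/n, 1/n)$; since this holds on a cofinite set, $g_n \to 0$ pointwise.

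For the implication $(1) \Rightarrow (2)$, let $\mathcal{U}$ be an open $\omega$-cover of $X$. Using the Tychonoff property, for each $U \in \mathcal{U}$ and each non-empty finite $F \subset U$ I construct a continuous $f_{U,F} : X \to [0,1]$ with $f_{U,F} \equiv 0$ on $F$ and $f_{U,F} \equiv 1$ on $X \setminus U$, by taking the product of the Tychonoff functions separating each point of $F$ from the closed set $X \setminus U$. Setting $A = \{f_{U,F}\}$, the $\omega$-cover property yields $0 \in \overline{A}$: any basic neighborhood of $0$ determined by a finite set $F'$ contains $f_{U,F'}$ as soon as $F' \subset U \in \mathcal{U}$. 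By Frechet-Urysohn I extract a sequence $f_{U_n,F_n} \to 0$ pointwise. The decisive observation is that if some $x \in X$ were outside $U_n$ for infinitely many $n$, then $f_{U_n,F_n}(x) = 1$ along that subsequence, contradicting $f_{U_n,F_n}(x) \to 0$; hence $x$ lies in $U_n$ cofinitely, so $\{U_n\}_n$ is the required $\gamma$-subcover of $\mathcal{U}$.

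The main obstacle is the step $\gamma \Rightarrow S_1(\Omega, \Gamma)$ invoked in the first direction: $\gamma$ as stated extracts a $\gamma$-subcover from a single $\omega$-cover, whereas the argument needs synchronized choices across the countable family $(\mathcal{U}_n)_n$. I plan to handle this by applying $\gamma$ to the auxiliary open $\omega$-cover whose elements are finite intersections $U_1 \cap \dots \cap U_k$ with $U_i \in \mathcal{U}_i$, and then reorganizing the resulting $\gamma$-subcover diagonally to produce one representative in each $\mathcal{U}_n$; the rest of both directions is routine bookkeeping with Tychonoff separation and the pointwise topology.
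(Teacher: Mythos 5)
First, a point of comparison: the paper does not prove this statement at all --- it is imported from Gerlits and Nagy with the explicit pointer ``for a proof of Theorem \ref{th2} see \cite{gn}'' --- so your attempt can only be measured against the classical Gerlits--Nagy argument, and in outline you reproduce it. Your direction $(1)\Rightarrow(2)$, via the functions $f_{U,F}$ vanishing on $F$ and equal to $1$ off $U$, is exactly the standard proof and is correct as written; the reduction of Frechet--Urysohnness to sequential closure at $0$ by homogeneity of $C_p(X)$ is also fine, as is the observation that $0\in\overline{A}$ is equivalent to each $\mathcal{U}_n=\{g^{-1}(-1/n,1/n):g\in A\}$ being an open $\omega$-cover.

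The genuine gap is the step you yourself flag as the main obstacle: that property $\gamma$ (a single-cover extraction principle) implies the selective principle $S_1(\Omega,\Gamma)$. This is a real theorem of Gerlits and Nagy, and your proposed fix does not close it. If you apply $\gamma$ to the auxiliary $\omega$-cover $\mathcal{V}=\{U_1\cap\cdots\cap U_k : U_i\in\mathcal{U}_i\}$, the extracted $\gamma$-subcover $(V_m)_m$ comes with representation lengths that nothing forces to be unbounded: for instance, if $\mathcal{U}_1$ happens already to contain a $\gamma$-subcover, the extraction may return exactly that subfamily, every member of which has a representation of length $1$, and then your ``diagonal reorganization'' produces representatives from $\mathcal{U}_1$ only and nothing at all from $\mathcal{U}_n$ for $n\ge 2$. (Note also that an element of $\mathcal{V}$ has many representations, so ``its length'' is not even well defined, and passing to suprema of lengths does not remove the bounded case.) The same defect persists in the concrete application despite the extra structure there --- $\mathcal{U}_{n+1}$ canonically refines $\mathcal{U}_n$ --- since a $\gamma$-subcover drawn from a single $\mathcal{U}_N$ only yields $\limsup_n|g_n(x)|\le 1/N$, not $g_n\to 0$. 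Handling this bounded case is precisely the nontrivial content of the Gerlits--Nagy lemma in \cite{gn}, and it requires a genuinely more careful recursive argument than the ``routine bookkeeping'' your plan assigns to it; as the plan stands, the implication $\gamma\Rightarrow S_1(\Omega,\Gamma)$, and with it the whole direction $(2)\Rightarrow(1)$, remains unproved.
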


\begin{theorem}\label{th3}(\cite{glm}). The space $C_p(X)$ is strongly sequentially separable if and only if
$X$ has a coarser second countable topology, and every coarser
second countable topology for $X$ has the property $\gamma$.
\end{theorem}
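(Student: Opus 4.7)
The plan is to establish both directions separately. For sufficiency, I will ``capture'' a countable dense $D \subset C_p(X)$ and a target $f \in C_p(X)$ inside a suitable coarser second countable topology on $X$, then invoke Theorem~\ref{th2}. For necessity, I will transfer strong sequential separability to every coarser second countable $Y$, and then derive property~$\gamma$ for such $Y$ by a Gerlits--Nagy-type argument.

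$(\Leftarrow)$: Suppose $X$ admits a coarser second countable topology and every such topology has property~$\gamma$. Given any countable dense $D \subset C_p(X)$ and any $f \in C_p(X)$, let $\tau'$ be the initial topology on $X$ generated by the countable family $D \cup \{f\}$. Then $\tau' \subseteq \tau_X$ since its generators are $\tau_X$-continuous, and a countable subbase of $\tau'$ is obtained from preimages of rational open intervals, so $\tau'$ is second countable. By hypothesis, $(X,\tau')$ has property~$\gamma$, so $C_p((X,\tau'))$ is Fr\'echet--Urysohn by Theorem~\ref{th2}. By construction, $D \cup \{f\} \subseteq C_p((X,\tau'))$. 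Moreover, $D$ is dense in $C_p((X,\tau'))$: for $g \in C_p((X,\tau'))$, a finite $F \subset X$ and $\varepsilon > 0$, also $g \in C_p(X)$, so the density of $D$ in $C_p(X)$ supplies the required approximant. The Fr\'echet--Urysohn property then delivers a sequence from $D$ converging to $f$ in $C_p((X,\tau'))$. Since pointwise convergence depends only on the underlying set, this same sequence converges to $f$ in $C_p(X)$, showing that $D$ is sequentially dense.

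$(\Rightarrow)$: Suppose $C_p(X)$ is strongly sequentially separable. Then $C_p(X)$ is separable, and a standard diagonal evaluation argument (using any countable dense subset of $C_p(X)$ to build a continuous injection into $\mathbb{R}^{\omega}$) yields a coarser second countable topology on $X$. Let $\tau'$ be any such topology and set $Y = (X, \tau')$, a separable metrizable space. Any countable dense $D' \subset C_p(Y)$ is also dense in $C_p(X)$: given $h \in C_p(X)$, a finite $F \subset X$ and $\varepsilon > 0$, the distinct points of $F$ in the metric space $Y$ admit a continuous interpolant $g \in C_p(Y)$ with $g|_F = h|_F$ (Tietze), and then $D'$ approximates $g$ at $F$. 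Strong sequential separability of $C_p(X)$ therefore makes $D'$ sequentially dense in $C_p(X)$, hence sequentially dense in $C_p(Y)$ as well; so $C_p(Y)$ is strongly sequentially separable. To conclude that $Y$ has property~$\gamma$, argue contrapositively: from an open $\omega$-cover $\alpha$ of $Y$ admitting no $\beta \subset \alpha$ with $\liminf \beta = Y$, assemble for each $U \in \alpha$ a continuous ``cover-function'' $f_U$ equal to $1$ outside $U$ and dropping to $0$ somewhere inside $U$, and let $D$ be the closure under a countable dense algebra generated by the $f_U$'s together with a fixed countable dense subset of $C_p(Y)$. Then $D$ is countable and dense in $C_p(Y)$, the constant function $1$ lies in $\overline{D}$, yet a sequence in $D$ converging pointwise to $1$ would, by extracting the cover indices, produce $\beta \subset \alpha$ with $\liminf \beta = Y$, a contradiction.

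The principal obstacle is the final step of necessity: the combinatorial-to-sequential conversion in $C_p(Y)$ for separable metric $Y$. The difficulty lies in choosing $D$ rich enough to be dense in $C_p(Y)$ (so that strong sequential separability is actually triggered) while rigid enough that any pointwise-convergent sequence from $D$ forces the existence of a $\liminf = Y$ subsequence of $\alpha$. The interplay between pointwise approximation (needing arbitrary finite-point interpolation) and preservation of the cover data (needing that the support-pattern of each $f_U$ survives in sequences) is the technical heart of the argument, and it is where the Gerlits--Nagy characterization (Theorem~\ref{th2}) is effectively embedded into the proof of Theorem~\ref{th3}.
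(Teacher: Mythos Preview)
The paper does not supply a proof of Theorem~\ref{th3}; it is quoted from \cite{glm} (see the sentence ``For a proof of Theorem~\ref{th2} see \cite{gn}, Theorem~\ref{th3} see \cite{glm}''), so there is nothing to compare against and your argument must stand on its own. Your sufficiency direction is correct and standard: trap $D\cup\{f\}$ inside $C_p$ of the coarser second countable space $(X,\tau')$, invoke property~$\gamma$ to obtain the Fr\'echet--Urysohn property via Theorem~\ref{th2}, and read off a convergent sequence. The first half of necessity is also fine: a countable dense $D'\subset C_p(Y)$ is dense in $C_p(X)$ by finite interpolation in the metric space $Y$, so strong sequential separability passes down to $C_p(Y)$.

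The genuine gap is exactly where you locate ``the principal obstacle'', and your sketch does not close it. You build $D$ as an algebra generated by the cover-functions $f_U$ \emph{together with} a fixed countable dense $D_0\subset C_p(Y)$; but then a sequence in $D$ converging to the constant $1$ may lie entirely inside $D_0$, carrying no cover information whatsoever, so no subfamily $\beta\subset\alpha$ can be ``extracted'' and no contradiction follows. The inclusion of $D_0$ --- which you need because your $f_U$'s alone (each equal to $1$ off $U$ and merely ``dropping to $0$ somewhere'') do not generate a dense set --- is precisely what breaks the extraction. One construction that does work: pass to a countable $\omega$-subcover $\{U_n\}$ (legitimate since all finite powers of the second countable space $Y$ are Lindel\"of), set $C_n=\{f\in C(Y):f\equiv 0\text{ on }Y\setminus U_n\}$, choose $D_n\subset C_n$ countable and dense in $C_n$ (possible since $C_p(Y)$ has a countable network and is therefore hereditarily separable), and put $D=\bigcup_n D_n$. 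Then $D$ is countable and dense in $C_p(Y)$ because $\{U_n\}$ is an $\omega$-cover; strong sequential separability yields $h_k\in D$ with $h_k\to 1$ pointwise; each $h_k$ lies in some $D_{n_k}$ and hence vanishes off $U_{n_k}$, so $h_k(y)\to 1$ forces $y\in U_{n_k}$ for all large $k$, giving $\liminf U_{n_k}=Y$. The essential difference from your attempt is that \emph{every} element of this $D$ is tagged with a cover index.
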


We characterize those spaces $X$ so that $B_1(X)$ is strongly
sequentially separable.

\begin{theorem}\label{th4} The function
space $B_1(X)$ is strongly sequentially separable if and only if
$X$ has a coarser second countable topology, and for any bijection
$\varphi$ from a space $X$ onto a
 separable metrizable space $Y$, such that  $\varphi^{-1}(U)$ --- $Z_{\sigma}$-set of $X$ for any open set
$U$ of $Y$, the space $Y$ has the property $\gamma$.
\end{theorem}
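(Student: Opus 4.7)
My plan is to parallel the structure of Theorem~\ref{th1} but exploit Theorem~\ref{th3} inside $Y$ itself to harvest property~$\gamma$, and to handle the converse by building, for each target function, a bespoke topology for which the hypothesis applies. On both sides, the clause ``$X$ has a coarser second countable topology'' comes for free from \cite{ps}, since separability is a prerequisite to strong sequential separability.

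For necessity, fix any bijection $\varphi\colon X\to Y$ onto a separable metrizable space $Y$ satisfying condition~(1), and define $\Phi\colon C_p(Y)\to B_1(X)$ by $\Phi(f)=f\circ\varphi$. Condition~(1) yields $(f\circ\varphi)^{-1}(G)=\varphi^{-1}(f^{-1}(G))$ a $Z_{\sigma}$-set of $X$, so $\Phi$ lands in $B_1(X)$, and bijectivity of $\varphi$ makes $\Phi$ a homeomorphism onto its image. I will show that for a countable dense $S\subseteq C_p(Y)$, the image $\Phi(S)$ is dense in $B_1(X)$: given $h\in B_1(X)$ and distinct $x_1,\dots,x_n\in X$, the points $\varphi(x_i)\in Y$ are distinct in the metric space $Y$, so a continuous interpolant $g\in C(Y)$ with $g(\varphi(x_i))=h(x_i)$ exists; density of $S$ in $C_p(Y)$ then provides $s\in S$ close to $g$ at these points, and $\Phi(s)$ is close to $h$ at $x_1,\dots,x_n$. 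Strong sequential separability promotes $\Phi(S)$ to a sequentially dense subset of $B_1(X)$; pulling each witnessing sequence back through the embedding shows $S$ is sequentially dense in $C_p(Y)$. Thus $C_p(Y)$ is strongly sequentially separable, and Theorem~\ref{th3}, applied to $Y$ (whose own topology serves as a coarser second countable topology on $Y$), forces $Y$ to have property~$\gamma$.

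For sufficiency, take any countable dense $D\subseteq B_1(X)$ and an arbitrary $h\in B_1(X)$, and form the topology $\tau_h$ on $X$ generated by $\{f^{-1}(G):f\in D\cup\{h\},\ G\subseteq\mathbb{R}\text{ open}\}$. The space $Y_h=(X,\tau_h)$ is second countable and Tychonoff as an initial topology of a countable family of real-valued functions, hence separable metrizable. The identity map $X\to Y_h$ satisfies condition~(1) because each subbasic generator is a $B_1(X)$-preimage of an open subset of $\mathbb{R}$, hence a $Z_{\sigma}$-set of $X$, and the class of $Z_{\sigma}$-sets is stable under finite intersections and countable unions. By hypothesis $Y_h$ has $\gamma$, so Theorem~\ref{th2} makes $C_p(Y_h)$ Fr\'echet-Urysohn. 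Now $D$ and $h$ both belong to $C(Y_h)$ by construction, and $D$ is dense in $C_p(Y_h)$ simply because $D$ is dense in $B_1(X)$ and $C(Y_h)\subseteq B_1(X)$ carries the subspace topology. Fr\'echet-Urysohn then delivers a sequence $d_n\in D$ with $d_n\to h$ pointwise on $X$, and since $h\in B_1(X)$ was arbitrary, $D$ is sequentially dense.

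I expect the main obstacle to be the converse. The topology generated by $D$ alone would only make elements of $D$ continuous, so Theorem~\ref{th2} applied to that topology would not reach a generic $h\in B_1(X)$. Adjoining $h$ to the generators forces $h$ into $C(Y_h)$ while preserving condition~(1), which is precisely what puts Fr\'echet-Urysohn to work; verifying that this enlarged topology is separable metrizable and still satisfies condition~(1) is the core of the argument, and the rest is routine pointwise bookkeeping.
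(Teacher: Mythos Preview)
Your proof is correct and follows essentially the same strategy as the paper's: embed $C_p(Y)$ into $B_1(X)$ via $f\mapsto f\circ\varphi$ for the forward direction, and for the converse adjoin the target function $h$ to the countable dense set $D$ to generate a separable metrizable topology satisfying condition~(1). The only cosmetic difference is that in the sufficiency step you invoke Theorem~\ref{th2} (Fr\'echet--Urysohn) directly, whereas the paper phrases the conclusion as ``$C_p(Y)$ is strongly sequentially separable''; since $\gamma$ implies Fr\'echet--Urysohn and $C_p(Y)$ is separable, the two formulations are interchangeable, and your explicit interpolation argument for the density of $\Phi(S)$ in $B_1(X)$ fills in a step the paper leaves implicit.
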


\begin{proof}
$(\Rightarrow)$. Assume that $B_1(X)$ is strongly sequentially
separable. Let $\varphi$ be a  bijection from a space $X$ onto a
 separable metrizable space $Y$, such that  $\varphi^{-1}(U)$ --- $Z_{\sigma}$-set of $X$ for any open set
$U$ of $Y$. Then for any $f\in C(Y)$, $h=f\circ \varphi\in
B_1(X)$. Since $\varphi$ is a bijection, a map $F: C(Y) \mapsto
B_1(X)$ (define as $F(f)=f\circ \varphi$) is an embeds in $B_1(X)$
i.e., $F(C_p(Y))\subset B_1(X)$. Note that $F(C_p(Y))$ is dense
separable subset of $B_1(X)$ and, hence, $C_p(Y)$ is strongly
sequentially separable. By Theorem \ref{th3}, the space $Y$ has
the property $\gamma$.

$(\Leftarrow)$. Assume that $X$ has a coarser second countable
topology. Then $B_1(X)$ is separable (Theorem 1 in  \cite{ps}).

Let $A$ be a countable dense subset of $B_1(X)$. We wish to show
that for any $f\in B_1(X)$ there is some sequence $\{f_i:i\in
\omega\}\subset A$ such that $f$ is the limit of the sequence.

Consider a topology $\tau$ generated by the family

 $\mathcal{P}=\{g^{-1}(G): G$ is an open set of $\mathbb{R}$ and
 $g\in A\bigcup\{f\} \}$. A space $Y=(X,\tau)$ is a separable metrizable space
 because $A$ is a countable dense subset
 of $B_1(X)$. Note that a function $g\in A\bigcup\{f\}$, considered as map from
 $Y$ to $\mathbb{R}$, is a continuous function. Let $\varphi$ be the identity
 map from $X$ on $Y$.

 We claim that $\varphi^{-1}(U)$ --- $Z_{\sigma}$-set of $X$ for any open
set $U$ of $Y$. Note that class of $Z_{\sigma}$-sets is closed
under a countable unions and finite intersections of its elements.
It follows that it is sufficient to prove for any $P\in
\mathcal{P}$. But $\varphi^{-1}(P)$ --- $Z_{\sigma}$-set for any
$P\in \mathcal{P}$ because $g\in A\bigcup\{f\}\subset B_1(X)$.
 It follows that the space $Y$ has
the property $\gamma$ and, hence, $C_p(Y)$ is strongly
sequentially separable. Since $A$ is countable dense subset
 of $B_1(X)$, the set $A$ (where a map in $A$ considered as map from
 $Y$ to $\mathbb{R}$) is a countable dense subset
 of $C_p(Y)$. Hence there is some sequence $\{f_i:i\in
\omega\}\subset A$ such that $f$ is the limit of the sequence.

\end{proof}

It is consistent and independent for arbitrary $X$, that $C_p(X)$
is strongly sequentially separable if and only if $C_{p}(X)$ is
Frechet-Urysohn.

Note that the theorem \ref{th2} implies the following proposition
(Corollary 17 in \cite{glm}).

\begin{proposition}(Cons (ZFC)) The following are equivalent:

\begin{enumerate}

\item  $C_p(X)$ is strongly sequentially separable;

\item   $X$ is countable;

\item  $C_p(X)$ is separable and Frechet-Urysohn.

\end{enumerate}

\end{proposition}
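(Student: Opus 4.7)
The plan is to prove the cycle $(2) \Rightarrow (3) \Rightarrow (1) \Rightarrow (2)$, with only the last arrow requiring a hypothesis beyond ZFC.

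I would first dispatch the two easy implications in ZFC. For $(2) \Rightarrow (3)$, if $X$ is countable then $C_p(X)$ is a subspace of the separable metrizable product $\mathbb{R}^X$, so it is separable and metrizable, and in particular Fr\'echet--Urysohn. For $(3) \Rightarrow (1)$, in any Fr\'echet--Urysohn space the closure of any set equals its sequential closure; hence if $A$ is any countable dense subset of the separable Fr\'echet--Urysohn space $C_p(X)$, then $A$ is automatically sequentially dense, which is exactly what strong sequential separability demands.

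The substantive work lies in $(1) \Rightarrow (2)$, and this is where the consistency hypothesis is consumed. Applying Theorem \ref{th3}, strong sequential separability of $C_p(X)$ yields a coarser second countable topology on $X$ together with the additional property that every coarser second countable topology on $X$ has property $\gamma$. I would then invoke a consistency result from the literature on $\gamma$-sets: it is consistent with ZFC that every separable metric space with property $\gamma$ is countable (this is the content of the ``Cons (ZFC)'' tag, and holds, for example, in models in which all $\gamma$-sets of reals are countable). In any such model the coarser topology on $X$, being a separable metric space with property $\gamma$, must be countable, and therefore so is $X$.

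The main obstacle is precisely this last step. While $(2) \Rightarrow (3) \Rightarrow (1)$ are outright theorems of ZFC via Theorem \ref{th2}, the reverse direction $(1) \Rightarrow (2)$ is genuinely independent of ZFC, so it can only be established under an additional axiom known to be consistent. Matching the separable metric topology on $X$ furnished by Theorem \ref{th3} against the relevant consistency statement about $\gamma$-sets is the key technical point, and explains why the proposition is tagged ``Cons (ZFC)'' rather than stated as an outright theorem.
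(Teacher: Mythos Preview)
Your argument is correct. The cycle $(2)\Rightarrow(3)\Rightarrow(1)$ is exactly the elementary ZFC content, and your $(1)\Rightarrow(2)$ via Theorem~\ref{th3} together with the consistency of ``every separable metric $\gamma$-space is countable'' (which holds, e.g., in the Laver model, since $\gamma$-sets have strong measure zero) is the standard way to close the loop.

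The paper itself does not supply a proof: it simply records the statement as Corollary~17 of \cite{glm} and remarks that it follows from Theorem~\ref{th2}. The route implicit in that remark is slightly different from yours. Using the sentence immediately preceding the proposition (that, consistently, strong sequential separability of $C_p(X)$ is equivalent to $C_p(X)$ being Fr\'echet--Urysohn), one gets $(1)\Leftrightarrow(3)$ directly; then Theorem~\ref{th2} converts $(3)$ into ``$X$ has property $\gamma$'' in its original topology, and since property $\gamma$ passes to any coarser topology, the separable metric refinement witnessing separability of $C_p(X)$ is a $\gamma$-set, hence countable under the same consistency hypothesis. Your approach bypasses this by invoking Theorem~\ref{th3}, which already packages the passage to a coarser second countable $\gamma$-topology; this is cleaner, at the cost of quoting the more elaborate characterization rather than the basic Gerlits--Nagy theorem.
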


For a space $B_1(X)$ we have a following

\begin{corollary}(Cons (ZFC)) The following are equivalent:

\begin{enumerate}

\item  $B_1(X)$ is strongly sequentially separable;

\item   $X$ is countable;

\item  $B_1(X)$ is separable and Frechet-Urysohn.

\end{enumerate}

\end{corollary}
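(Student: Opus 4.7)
The plan is to establish the three-way equivalence via the cycle $(2) \Rightarrow (3) \Rightarrow (1) \Rightarrow (2)$, where only the last implication uses the consistency hypothesis; the other two hold in ZFC. This mirrors the organization of the preceding Proposition for $C_p(X)$, but with Theorems \ref{th1} and \ref{th4} substituted for Theorems \ref{th31} and \ref{th3}.

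For $(2) \Rightarrow (3)$, I would observe that if $X$ is countable then $B_1(X)$ sits inside $\mathbb{R}^X$, which is separable and metrizable in the product topology; hence $B_1(X)$ is itself separable and metrizable, so in particular first countable and therefore Frechet-Urysohn. For $(3) \Rightarrow (1)$, I would appeal to the basic fact that in a Frechet-Urysohn space sequential closure coincides with topological closure, so every countable dense set is automatically sequentially dense; combined with separability this is exactly strong sequential separability. Both of these directions are short and unconditional.

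The substantive step is $(1) \Rightarrow (2)$. Assuming $B_1(X)$ is strongly sequentially separable, it is in particular sequentially separable, so Theorem \ref{th1} furnishes a bijection $\varphi : X \to Y$ onto a separable metrizable space $Y$ such that $\varphi^{-1}(U)$ is a $Z_{\sigma}$-set of $X$ for every open $U \subseteq Y$. Theorem \ref{th4} then forces $Y$ to satisfy property $\gamma$. At this point I would invoke the Galvin-Miller ZFC-consistency theorem (the same ingredient used for Corollary~17 of \cite{glm} and the preceding Proposition), namely that it is consistent with ZFC that every separable metrizable space with property $\gamma$ is countable. Since $\varphi$ is a bijection, $|X| = |Y| \le \aleph_0$, giving (2).

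The main obstacle is really just bookkeeping: one has to verify that the $Y$ produced by Theorem \ref{th1} from mere sequential separability is forced, by the stronger hypothesis of strong sequential separability together with Theorem \ref{th4}, to be a $\gamma$-set. Once this transfer is observed, the consistency argument is word-for-word the one used in the $C_p$ case, and no new set-theoretic input is required.
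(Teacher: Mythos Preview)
Your argument is correct and is essentially the proof the paper leaves implicit (the Corollary is stated without proof, as an immediate consequence of Theorem~\ref{th4} paralleling the preceding Proposition for $C_p(X)$). One small simplification: you do not actually need Theorem~\ref{th1} to produce the bijection in the step $(1)\Rightarrow(2)$; strong sequential separability already gives separability of $B_1(X)$, hence a coarser second countable topology on $X$, and the identity map to that topology is a continuous bijection onto a separable metrizable $Y$ with $\varphi^{-1}(U)$ cozero (hence $Z_\sigma$) for every open $U\subseteq Y$, so Theorem~\ref{th4} applies to it directly.
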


\medskip

Recall that the cardinal $\mathfrak{p}$ is the smallest cardinal
so that there is a collection of $\mathfrak{p}$ many subsets of
the natural numbers with the strong finite intersection property
but no infinite pseudo-intersection. Note that $\omega_1 \leq
\mathfrak{p} \leq \mathfrak{c}$. (See \cite{dou} for more on small
cardinals including $\mathfrak{p}$.)

\begin{example}  Assume that $\omega_1<\mathfrak{p}$. There is
uncountable space $X$ such that $B_1(X)$ is strongly sequentially
separable but not Frechet-Urysohn.
\end{example}

\begin{proof}
Let $X$ be $\omega_1$ with the discrete topology. Clearly, that
$X$ is separable submetrizable space and so $B_1(X)$ is a
separable, dense subspace of $\mathbb{R}^{X}$. We know (from
Theorem 11 in \cite{glm}) that $\mathbb{R}^{X}$ must be strongly
sequentially separable space and so $B_1(X)$ is strongly
sequentially separable. Note that $\mathbb{R}^{\omega_1}$ is not
Frechet-Urysohn.
\end{proof}

\section{Acknowledgement}

This work was supported by Act 211 Government of the Russian
Federation, contract ¹ 02.A03.21.0006.

\bibliographystyle{model1a-num-names}
\bibliography{<your-bib-database>}

%% Authors are advised to submit their bibtex database files. They are
%% requested to list a bibtex style file in the manuscript if they do
%% not want to use model1a-num-names.bst.

%% References without bibTeX database:
%%\bibliographystyle{plain}

% \begin{thebibliography}{00}

%% \bibitem must have the following form:
%%   \bibitem{key}...
%%

% \bibitem{}

% \end{thebibliography}

\end{document}